\renewcommand{\vec}{\mathbf}
      \theoremstyle{plain}
      \newtheorem{theorem}{Theorem}
      \newtheorem{lemma}[theorem]{Lemma}
      \newtheorem{corollary}[theorem]{Corollary}
      \newtheorem{proposition}[theorem]{Proposition}
      \newtheorem{question}[theorem]{Question}
      \theoremstyle{definition}
      \newtheorem{definition}[theorem]{Definition}
      \theoremstyle{remark}
      \theoremstyle{plain}
      \newtheorem*{theorem*}{Theorem}
      \newtheorem*{lemma*}{Lemma}
      \newtheorem*{corollary*}{Corollary}
      \newtheorem*{proposition*}{Proposition}
      \newtheorem*{conjecture*}{Conjecture}
      \newtheorem*{question*}{Question}
      \newtheorem*{claim*}{Claim}
      \theoremstyle{definition}
      \newtheorem*{definition*}{Definition}
      \newtheorem*{example*}{Example}
      \newtheorem*{game*}{Game}
      \theoremstyle{remark}
      \newtheorem*{remark*}{Remark}
\begin{document}

\title{Arhangelskii's \(\alpha\)-principles and selection games}

\author{Steven Clontz}
\address{Department of Mathematics and Statistics,
The University of South Alabama,
Mobile, AL 36688}
\email{sclontz@southalabama.edu}

\keywords{Selection principle, selection game,
\(\alpha_i\) property, convergence}


\begin{abstract}
Arhangelskii's properties \(\alpha_2\) and \(\alpha_4\)
defined for convergent sequences
may be characterized in terms of Scheeper's selection principles.
We generalize these results to hold for more general collections
and consider these results in terms of selection games.
\end{abstract}

\maketitle

The following characterizations were given as Definition 1 by Kocinac in
\cite{MR2417134}.

\begin{definition}
\term{Arhangelskii's \(\alpha\)-principles} \(\alpha_i(\mc A,\mc B)\) are defined as follows
for \(i\in\{1,2,3,4\}\).
Let \(A_n\in\mc A\) for all \(n<\omega\); then there exists \(B\in\mc B\) such that:
\begin{itemize}
\item[\(\alpha_1\):] \(A_n\cap B\) is cofinite in \(A_n\) for all \(n<\omega\).
\item[\(\alpha_2\):] \(A_n\cap B\) is infinite for all \(n<\omega\).
\item[\(\alpha_3\):] \(A_n\cap B\) is infinite for infinitely-many \(n<\omega\).
\item[\(\alpha_4\):] \(A_n\cap B\) is non-empty for infinitely-many \(n<\omega\).
\end{itemize}
\end{definition}

When \((\mc A,\mc B)\) is omitted, it is assumed that \(\mc A=\mc B\) is the
collection \(\Gamma_{X,x}\) of sequences converging to some point \(x\in X\),
as introduced by Arhangelskii in \cite{MR0394575}.
Provided \(\mc A\) only contains infinite sets, it's easy to see that
\(\alpha_n(\mc A,\mc B)\) implies \(\alpha_{n+1}(\mc A,\mc B)\).

We aim to relate these to the following games.

\begin{definition}
  The \term{selection game} \(G_1(\mc A,\mc B)\) (resp. \(G_{fin}(\mc A,\mc B)\))
  is an \(\omega\)-length game involving Players \(\plI\) and \(\plII\). 
  During round \(n\), \(\plI\) chooses
  \(A_n\in\mc A\), followed by \(\plII\) choosing \(a_n\in A_n\)
  (resp. \(F_n\in[A_n]^{<\aleph_0}\)).
  Player \(\plII\) wins in the case that \(\{a_n:n<\omega\}\in\mc B\)
  (resp. \(\bigcup\{F_n:n<\omega\}\in\mc B\)),
  and Player \(\plI\) wins otherwise.
\end{definition}

Such games are well-represented in the literature; see \cite{MR1378387}
for example.
We will also consider the similarly-defined games \(G_{<2}(\mc A,\mc B)\)
(\(\plII\) chooses 0 or 1 points from each choice by \(\plI\)) and \(G_{cf}(\mc A,\mc B)\)
(\(\plII\) chooses cofinitely-many points).

\begin{definition}
  Let \(P\) be a player in a game \(G\). \(P\) has a \term{winning strategy}
  for \(G\), denoted \(P\win G\), if \(P\) has a strategy that defeats every
  possible counterplay by their opponent. If a strategy only relies on the
  round number and ignores the moves of the opponent, the strategy is said
  to be \term{predetermined}; the existence of a predetermined winning strategy
  is denoted \(P\prewin G\).
\end{definition}

We briefly note that the statement \(\plI\notprewin G_\star(\mc A,\mc B)\)
is often denoted as the \term{selection principle} \(S_\star(\mc A,\mc B)\).

\begin{definition}
Let \(\Gamma_{X,x}\) be the collection of non-trivial sequences \(S\subseteq X\) converging to \(x\),
that is, infinite subsets of \(X\setminus\{x\}\) such that for each neighborhood \(U\) of \(x\),
\(S\cap U\) is cofinite in \(S\).
\end{definition}

\begin{definition}
Let \(\Gamma_X\) be the collection of open \term{\(\gamma\)-covers} \(\mc U\) of \(X\),
that is, infinite open covers of \(X\) such that \(X\not\in \mc U\) and for each \(x\in X\),
\(\{U\in\mc U:x\in U\}\) is cofinite in \(\mc U\).
\end{definition}

The similarity in nomenclature follows from the observation that every non-trivial sequence in
\(C_p(X)\) converging to the zero function \(\vec{0}\) naturally defines a corresponding
\(\gamma\)-cover in \(X\), see e.g. Theorem 4 of
\cite{MR1396994}.

The equivalence of \(\alpha_2(\Gamma_{X,x}\Gamma_{X,x})\) and 
\(\plI\notprewin G_1(\Gamma_{X,x},\Gamma_{X,x})\) was briefly asserted by Sakai
in the introduction of \cite{MR2280899}; the similar
equivalence of \(\alpha_4(\Gamma_{X,x}\Gamma_{X,x})\) and 
\(\plI\notprewin G_{fin}(\Gamma_{X,x},\Gamma_{X,x})\) seems to be folklore.
In fact, these relationships hold in more generality.

Note that by these definitions, convergent sequences (resp. \(\gamma\)-covers) may be uncountable,
but any infinite subset of either would remain a convergent sequence (resp. \(\gamma\)-cover),
in particular, countably infinite subsets. We capture this idea as follows.

\begin{definition}
Say a collection \(\mc A\) is \term{\(\Gamma\)-like} if it satisfies the following
for each \(A\in\mc A\).
\begin{itemize}
\item \(|A|\geq\aleph_0\).
\item If \(A'\subseteq A\) and \(|A'|\geq\aleph_0\), then \(A'\in\mc A\).
\end{itemize}
\end{definition}

We also require the following.

\begin{definition}
Say a collection \(\mc A\) is \term{almost-\(\Gamma\)-like} if
for each \(A\in\mc A\), there is \(A'\subseteq A\) such that:
\begin{itemize}
\item \(|A'|=\aleph_0\).
\item If \(A''\) is a cofinite subset of \(A'\), then \(A''\in\mc A\).
\end{itemize}
\end{definition}

So all \(\Gamma\)-like sets are almost-\(\Gamma\)-like.

We are now able to prove a few general equivalences between \(\alpha\)-princples
and selection games.

\section{On \(\alpha_2(\mc A,\mc B)\) and \(G_1(\mc A,\mc B)\)}

\begin{theorem}
Let \(\mc A\) be almost-\(\Gamma\)-like and \(\mc B\) be \(\Gamma\)-like. 
Then \(\alpha_2(\mc A,\mc B)\) holds if and only
if \(\plI\notprewin G_1(\mc A,\mc B)\).
\end{theorem}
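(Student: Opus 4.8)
The plan is to prove the two implications separately, using the \(\Gamma\)-likeness of \(\mc B\) for the forward direction and the almost-\(\Gamma\)-likeness of \(\mc A\) for the reverse direction. Throughout, recall that a predetermined strategy for \(\plI\) in \(G_1(\mc A,\mc B)\) is simply a choice of a sequence \((A_n)_{n<\omega}\) with each \(A_n\in\mc A\), so \(\plI\prewin G_1(\mc A,\mc B)\) asserts the existence of such a sequence for which every selection \(a_n\in A_n\) by \(\plII\) yields \(\{a_n:n<\omega\}\notin\mc B\).

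For \(\alpha_2(\mc A,\mc B)\Rightarrow \plI\notprewin G_1(\mc A,\mc B)\): given any predetermined strategy \((A_n)_{n<\omega}\) for \(\plI\), I would apply \(\alpha_2(\mc A,\mc B)\) to obtain \(B\in\mc B\) with \(A_n\cap B\) infinite for every \(n\), and then have \(\plII\) respond by recursively choosing \(a_n\in (A_n\cap B)\setminus\{a_0,\dots,a_{n-1}\}\); this is possible precisely because \(A_n\cap B\) is infinite while only finitely many points have been used. The resulting set \(\{a_n:n<\omega\}\) is an infinite subset of \(B\), so since \(\mc B\) is \(\Gamma\)-like it lies in \(\mc B\), defeating the strategy.

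For \(\plI\notprewin G_1(\mc A,\mc B)\Rightarrow\alpha_2(\mc A,\mc B)\): given \(A_n\in\mc A\) for all \(n\), I would use almost-\(\Gamma\)-likeness to fix for each \(n\) a countable \(A_n'=\{a_n^k:k<\omega\}\subseteq A_n\) (enumerated without repetition) whose cofinite subsets all lie in \(\mc A\). Fixing a bijection \(\pi:\omega\to\omega\times\omega\), I would define a predetermined strategy for \(\plI\) that at round \(m\) with \(\pi(m)=(n,j)\) plays the tail \(\{a_n^k:k\geq j\}\), a cofinite subset of \(A_n'\) and hence a member of \(\mc A\). Since \(\plI\notprewin G_1(\mc A,\mc B)\), there is a counterplay in which \(\plII\)'s selections form a set \(B\in\mc B\). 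For each fixed \(n\) and each \(j\), the unique round with \(\pi(m)=(n,j)\) forces \(\plII\) to pick some \(a_n^k\) with \(k\geq j\); hence \(B\cap A_n'\) contains points of arbitrarily large index and is therefore infinite, so \(A_n\cap B\supseteq A_n'\cap B\) is infinite. Thus \(B\) witnesses \(\alpha_2\) for the given sequence.

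The routine checks (distinctness of \(\plII\)'s choices, and that each tail really is cofinite in \(A_n'\)) are immediate. I expect the main obstacle to be the reverse direction: the content lies in designing the predetermined strategy so that \emph{each} \(A_n\) is revisited infinitely often with progressively thinned-out tails, which is exactly what converts a single winning counterplay \(B\in\mc B\) into infinitely many hits in every \(A_n\) simultaneously. The almost-\(\Gamma\)-like hypothesis is precisely what guarantees these thinned tails remain legal moves in \(\mc A\); without it \(\plI\) could not force \(\plII\) to produce unboundedly-indexed elements, and the argument would break.
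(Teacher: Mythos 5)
Your proposal is correct and follows essentially the same approach as the paper: the forward direction is identical (recursively pick distinct points of $A_n\cap B$ and use $\Gamma$-likeness of $\mathcal B$), and your reverse direction — interleaving progressively thinned cofinite tails of the countable sets given by almost-$\Gamma$-likeness so that each $A_n$ is revisited with unbounded starting indices — is the paper's argument, differing only in bookkeeping (a bijection $\omega\to\omega\times\omega$ versus a finite-to-one-inverted map $\theta$ with infinite fibers).
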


\begin{proof}
We first assume \(\alpha_2(\mc A,\mc B)\) and let \(A_n\in\mc A\) for \(n<\omega\)
define a predetermined strategy for \(\plI\).
We may apply \(\alpha_2(\mc A,\mc B)\) to choose \(B\in\mc B\) such that
\(|A_n\cap B|\geq\aleph_0\). We may then choose \(a_n\in(A_n\cap B)\setminus\{a_i:i<n\}\)
for each \(n<\omega\). It follows that \(B'=\{a_n:n<\omega\}\in\mc B\) since
\(B'\) is an infinite subset of \(B\in\mc B\); therefore \(A_n\) does not define
a winning predetermined strategy for \(\plI\).

Now suppose \(\plI\notprewin G_1(\mc A,\mc B)\). Given \(A_n\in\mc A\) for \(n<\omega\),
first choose \(A_n'\in\mc A\) such that \(A_n'=\{a_{n,j}:j<\omega\}\subseteq A_n\),
\(j<k\) implies \(a_{n,j}\not=a_{n,k}\),
and \(A_{n,m}=\{a_{n,j}:m\leq j<\omega\}\in\mc A\).
Finally choose
some \(\theta:\omega\to\omega\) such that \(|\theta^{\leftarrow}(n)|=\aleph_0\) for
each \(n<\omega\).

Since playing \(A_{\theta(m),m}\) during round \(m\)
does not define a winning strategy for \(\plI\) in
\(G_1(\mc A,\mc B)\), \(\plII\) may choose \(x_m\in A_{\theta(m),m}\)
such that \(B=\{x_m:m<\omega\}\in\mc B\).
Choose \(i_m<\omega\) for each \(m<\omega\) such that
\(x_m=a_{\theta(m),i_m}\), noting \(i_m\geq m\).
It follows that 
\(A_n\cap B\supseteq\{a_{\theta(m),i_m}:m\in\theta^{\leftarrow}(n)\}\).
Since for each \(m\in\theta^{\leftarrow}(n)\) there exists
\(M\in\theta^{\leftarrow}(n)\) such that \(m\leq i_m<M\leq i_{M}\),
and therefore \(a_{\theta(m),i_m}\not=a_{\theta(m),i_{M}}=a_{\theta(M),i_{M}}\),
we have shown that \(A_n\cap B\) is infinite. Thus \(B\) witnesses
\(\alpha_2(\mc A,\mc B)\).
\end{proof}

While \(\alpha_2(\mc A,\mc B)\) involves infinite intersection and
\(G_1(\mc A,\mc B)\) involves single selections, the previous result is made
more intuitive given the following result, shown for \(\mc A=\mc B=\Gamma_{X,x}\)
by Nogura in \cite{MR812643}.

\begin{definition}
\(\alpha_2'(\mc A,\mc B)\) is the following claim:
if \(A_n\in\mc A\) for all \(n<\omega\), then there exists \(B\in\mc B\) such that
\(A_n\cap B\) is nonempty for all \(n<\omega\).
\end{definition}

(Note that \(\alpha_5\) is sometimes used in the literature in place of \(\alpha_2'\).)

\begin{proposition}
If \(\mc A\) is almost-\(\Gamma\)-like, then
\(\alpha_2(\mc A,\mc B)\) is equivalent to \(\alpha_2'(\mc A,\mc B)\).
\end{proposition}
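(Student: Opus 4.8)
The plan is to prove the two implications separately, with the forward direction essentially trivial and the reverse requiring the same tail-diagonalization device used in the preceding theorem.

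First I would observe that \(\alpha_2(\mc A,\mc B)\) implies \(\alpha_2'(\mc A,\mc B)\) immediately. Since \(\mc A\) is almost-\(\Gamma\)-like, every \(A\in\mc A\) contains a countably infinite subset \(A'\) and is therefore itself infinite; hence any \(B\in\mc B\) witnessing that all intersections \(A_n\cap B\) are infinite also witnesses that they are nonempty. No part of the almost-\(\Gamma\)-like hypothesis beyond this infiniteness is needed for this direction.

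The substance lies in the reverse implication. Assuming \(\alpha_2'(\mc A,\mc B)\) and given \(A_n\in\mc A\) for \(n<\omega\), I would first invoke the almost-\(\Gamma\)-like hypothesis to replace each \(A_n\) by a countable subset \(A_n'=\{a_{n,j}:j<\omega\}\) with distinct entries all of whose cofinite subsets lie in \(\mc A\). For each \(n,m<\omega\) I would then form the tail \(A_{n,m}=\{a_{n,j}:m\leq j<\omega\}\), which is a cofinite subset of \(A_n'\) and hence a member of \(\mc A\). This produces a countable family \(\{A_{n,m}:n,m<\omega\}\subseteq\mc A\), and after reindexing it by a bijection \(\omega\to\omega\times\omega\) I would apply \(\alpha_2'(\mc A,\mc B)\) to obtain some \(B\in\mc B\) meeting every \(A_{n,m}\).

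Finally I would check that this \(B\) witnesses \(\alpha_2(\mc A,\mc B)\). Fixing \(n\), for each \(m<\omega\) the nonempty intersection \(A_{n,m}\cap B\) yields some \(a_{n,j}\in B\) with \(j\geq m\); since \(m\) is arbitrary and the \(a_{n,j}\) are pairwise distinct, \(A_n'\cap B\) must be infinite, and hence so is \(A_n\cap B\supseteq A_n'\cap B\). I expect the only point requiring care to be the tail construction together with the observation that forcing \(B\) to meet every tail \(A_{n,m}\) is exactly what upgrades nonempty intersection into infinite intersection, a move directly parallel to the diagonalization in the theorem above.
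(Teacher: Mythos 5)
Your proposal is correct and follows essentially the same route as the paper: replace each \(A_n\) by a countable subset \(A_n'\) supplied by the almost-\(\Gamma\)-like hypothesis, apply \(\alpha_2'(\mc A,\mc B)\) to the countable family of cofinite tails \(A_{n,m}\in\mc A\), and observe that a single \(B\) meeting every tail must meet each \(A_n'\) infinitely often. The only difference is cosmetic (you delete a tail of each individual \(A_n'\) where the paper deletes a finite block drawn from several of them), so there is nothing further to add.
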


\begin{proof}
The forward implication is immediate, so we assume \(\alpha_2'(\mc A,\mc B)\).
Given \(A_n\in\mc A\), we apply the almost-\(\Gamma\)-like property to obtain
\(A_n'=\{a_{n,m}:m<\omega\}\subseteq A_n\) such that
\(A_{n,m}=A_n\setminus\{a_{i,j}:i,j<m\}\in\mc A\) for all \(m<\omega\).

By applying \(\alpha_2'(\mc A,\mc B)\) to \(A_{n,m}\), we obtain
\(B\in\mc B\) such that \(A_{n,m}\cap B\) is nonempty for all \(n,m<\omega\).
Since it follows that \(A_n\cap B\) is infinite for all \(n<\omega\),
we have established \(\alpha_2(\mc A,\mc B)\).
\end{proof}

\section{On \(\alpha_4(\mc A,\mc B)\) and \(G_{fin}(\mc A,\mc B)\)}

A similar correspondence exists between \(\alpha_4(\mc A,\mc B)\)
and \(G_{fin}(\mc A,\mc B)\).

\begin{theorem}
Let \(\mc A\) be almost-\(\Gamma\)-like and \(\mc B\) be \(\Gamma\)-like. 
Then \(\alpha_4(\mc A,\mc B)\) holds if and only
if \(\plI\notprewin G_{<2}(\mc A,\mc B)\) if and only if
\(\plI\notprewin G_{fin}(\mc A,\mc B)\).
\end{theorem}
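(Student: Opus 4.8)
The plan is to prove a cycle of implications among the three statements, following the same strategy that worked for Theorem on $\alpha_2$. The chain I would establish is
\[
\alpha_4(\mc A,\mc B) \implies \plI\notprewin G_{<2}(\mc A,\mc B) \implies \plI\notprewin G_{fin}(\mc A,\mc B) \implies \alpha_4(\mc A,\mc B).
\]
The middle implication is nearly trivial: any winning predetermined strategy for $\plI$ in $G_{fin}$ restricts to one in $G_{<2}$, since a selection of at most one point is a finite selection, so contrapositively $\plI\notprewin G_{<2}$ yields $\plI\notprewin G_{fin}$. Wait, the direction needs care; a predetermined strategy for $\plI$ is just a sequence $(A_n)$ independent of the game variant, and $\plII$ has more freedom in $G_{fin}$ than in $G_{<2}$, so defeating $\plI$ is easier in $G_{fin}$. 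Thus $\plI\notprewin G_{<2}$ implies $\plI\notprewin G_{fin}$, giving the second arrow directly.

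For the first implication, $\alpha_4(\mc A,\mc B) \implies \plI\notprewin G_{<2}(\mc A,\mc B)$, I would mimic the forward direction of the $\alpha_2$ theorem. Given a predetermined strategy $(A_n)$ for $\plI$, apply $\alpha_4$ to obtain $B\in\mc B$ with $A_n\cap B$ nonempty for infinitely many $n$. For each such $n$ let $\plII$ select a single point $a_n\in A_n\cap B$, and for the remaining rounds let $\plII$ select nothing (the $G_{<2}$ rule permits choosing $0$ points). The resulting set of chosen points is an infinite subset of $B$, hence lies in $\mc B$ by $\Gamma$-likeness, so $(A_n)$ is not winning for $\plI$.

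The main work is the third implication, $\plI\notprewin G_{fin}(\mc A,\mc B) \implies \alpha_4(\mc A,\mc B)$, and I would structure it exactly as the second half of the $\alpha_2$ theorem. Given $A_n\in\mc A$, use the almost-$\Gamma$-like property to thin each $A_n$ to a countable $A_n'=\{a_{n,j}:j<\omega\}$ whose cofinite tails $A_{n,m}$ remain in $\mc A$, and fix $\theta\colon\omega\to\omega$ with each fiber $\theta^{\leftarrow}(n)$ infinite. Feeding $\plI$'s (non-winning) predetermined strategy the plays $A_{\theta(m),m}$ in round $m$, I obtain finite sets $F_m\subseteq A_{\theta(m),m}$ with $B=\bigcup_m F_m\in\mc B$. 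The key observation is that each nonempty $F_m$ contributes points lying in $A_{\theta(m)}$, so $B$ meets $A_n$ whenever some $m\in\theta^{\leftarrow}(n)$ has $F_m\neq\emptyset$. Since $B\in\mc B$ is infinite and each $F_m$ is finite, infinitely many $F_m$ are nonempty; because the fibers partition $\omega$, infinitely many distinct values $\theta(m)=n$ occur among these $m$, so $A_n\cap B\neq\emptyset$ for infinitely many $n$, establishing $\alpha_4(\mc A,\mc B)$.

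The step I expect to require the most care is the counting argument at the end of the third implication: I must confirm that ``infinitely many nonempty $F_m$'' translates into ``infinitely many distinct indices $n$ with $A_n\cap B\neq\emptyset$,'' rather than all the nonempty contributions piling up over finitely many fibers. This is where the choice of $\theta$ with infinite fibers, combined with the finiteness of each $F_m$ forcing cofinally many nonempty selections, does the work; I would verify that if only finitely many fibers received a nonempty $F_m$ then $B$ would be a finite union of finite sets, contradicting $B\in\mc B$ and hence $|B|\geq\aleph_0$.
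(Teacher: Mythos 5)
Your overall cycle of implications matches the paper's, and the middle arrow is fine, but both of the substantive arrows have genuine gaps. In the first implication (\(\alpha_4(\mc A,\mc B)\Rightarrow\plI\notprewin G_{<2}(\mc A,\mc B)\)) you apply \(\alpha_4\) directly to the sequence \((A_n)\) and assert that the selected points form an infinite subset of \(B\). That is not justified: \(\alpha_4\) only guarantees \(A_n\cap B\neq\emptyset\) for infinitely many \(n\), and nothing prevents \(A_n\cap B\) from being the same singleton \(\{p\}\) for every such \(n\), in which case \(\plII\)'s selections union to the finite set \(\{p\}\notin\mc B\). This is exactly where the almost-\(\Gamma\)-like hypothesis on \(\mc A\) must be used: the paper first replaces each \(A_n\) by a countable \(A_n'=\{a_{n,j}:j<\omega\}\) and then by \(A_n''=A_n'\setminus\{a_{i,j}:i,j<n\}\in\mc A\), applies \(\alpha_4\) to the sequence \((A_n'')\), and only then selects points; since a point chosen from \(A_n''\) avoids the first \(n\times n\) grid of enumerated points, the selections are forced to include infinitely many distinct points, which is what makes the resulting set land in \(\mc B\).

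The third implication has the dual problem: you imported the \(\theta\)-with-infinite-fibers device from the \(\alpha_2\) argument, but here it breaks the count. If \(\plII\)'s nonempty responses \(F_m\) all occur for \(m\in\theta^{\leftarrow}(0)\), then \(B=\bigcup_m F_m\) can still be infinite (a single fiber is infinite, so infinitely many finite sets can pile up inside it) while witnessing \(A_n\cap B\neq\emptyset\) only for \(n=0\); your proposed repair --- that only finitely many contributing fibers would force \(B\) to be a finite union of finite sets --- is therefore false. The fix is to discard \(\theta\) entirely, as the paper does: have \(\plI\) play \(A_n\) itself in round \(n\). Then \(F_n\subseteq A_n\), infinitely many \(F_n\) are nonempty because \(B\) is infinite while each \(F_n\) is finite, and each such \(n\) directly gives \(A_n\cap B\supseteq F_n\neq\emptyset\), so the distinctness of the indices is automatic and no thinning or bookkeeping is needed in this direction.
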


\begin{proof}
We first assume \(\alpha_4(\mc A,\mc B)\) and let \(A_n\in\mc A\) for \(n<\omega\)
define a predetermined strategy for \(\plI\) in 
\(G_{<2}(\mc A,\mc B)\). We then may choose \(A_n'\in\mc A\) where
\(A_n'=\{a_{n,j}:j<\omega\}\subseteq A_n\), \(j<k\) implies
\(a_{n,j}\not=a_{n,k}\), and \(A_n''=A_n'\setminus\{a_{i,j}:i,j<n\}\in\mc A\).

By applying \(\alpha_4(\mc A,\mc B)\) to \(A_n''\), we obtain \(B\in\mc B\)
such that \(A_n''\cap B\not=\emptyset\) for infintely-many \(n<\omega\).
We then let \(F_n=\emptyset\) when \(A_n''\cap B=\emptyset\), and
\(F_n=\{x_n\}\) for some \(x_n\in A_n''\cap B\) otherwise. Then we will have that
\(B'=\bigcup\{F_n:n<\omega\}\subseteq B\) belongs to \(\mc B\) once we show that
\(B'\) is infinite. To see this, for \(m\leq n<\omega\) note that either \(F_m\) is
empty (and we let \(j_m=0\)) or \(F_m=\{a_{m,j_m}\}\)
for some \(j_m\geq m\); choose \(N<\omega\) such that \(j_m<N\) for all
\(m\leq n\) and \(F_N=\{x_N\}\). Thus \(F_m\not=F_N\) for all \(m\leq n\) since
\(x_{N}\not\in\{a_{i,j}:i,j< N\}\). Thus \(\plII\) may defeat the predetermined
strategy \(A_n\) by playing \(F_n\) each round.

Since \(\plI\notprewin G_{<2}(\mc A,\mc B)\) immediately implies
\(\plI\notprewin G_{fin}(\mc A,\mc B)\), we assume the latter. Given \(A_n\in\mc A\)
for \(n<\omega\), we note this defines a (non-winning) predetermined 
strategy for \(\plI\), so \(\plII\) may choose \(F_n\in[A_n]^{<\aleph_0}\) such that
\(B=\bigcup\{F_n:n<\omega\}\in\mc B\). Since \(B\) is infinite, we note
\(F_n\not=\emptyset\) for infinitely-many \(n<\omega\). Thus \(B\) witnesses
\(\alpha_4(\mc A,\mc B)\) since \(A_n\cap B\supseteq F_n\not=\emptyset\) for
infinitely-many \(n<\omega\).
\end{proof}

This shows that \(\plII\) gains no advantage from picking more than one
point per round. This in fact only depends on \(\mc B\) being
\(\Gamma\)-like, which we formalize in the following results.

\begin{theorem}
Let \(\mc B\) be \(\Gamma\)-like. Then \(\plI\prewin G_{<2}(\mc A,\mc B)\)
if and only if \(\plI\prewin G_{fin}(\mc A,\mc B)\).
\end{theorem}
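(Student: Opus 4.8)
The plan is to prove the sharper pointwise statement that a \emph{fixed} predetermined strategy \((A_n)_{n<\omega}\) for \(\plI\) is winning in \(G_{<2}(\mc A,\mc B)\) if and only if it is winning in \(G_{fin}(\mc A,\mc B)\); the theorem then follows at once, since each of \(\plI\prewin G_{<2}(\mc A,\mc B)\) and \(\plI\prewin G_{fin}(\mc A,\mc B)\) merely asserts the existence of such a sequence. One direction is essentially trivial: every legal play by \(\plII\) in \(G_{<2}\) (choosing a set of size at most \(1\) each round) is also a legal play in \(G_{fin}\), so if \((A_n)\) defeats every \(G_{fin}\)-counterplay it certainly defeats every \(G_{<2}\)-counterplay. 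Thus a winning predetermined strategy in \(G_{fin}\) is automatically one in \(G_{<2}\).

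For the reverse direction I would argue by contraposition, and this is the only place the \(\Gamma\)-like hypothesis on \(\mc B\) is needed. Suppose \((A_n)\) fails to be winning for \(\plI\) in \(G_{fin}\), witnessed by a counterplay \(F_n\in[A_n]^{<\aleph_0}\) with \(B=\bigcup\{F_n:n<\omega\}\in\mc B\). Since \(\mc B\) is \(\Gamma\)-like, \(B\) is infinite. For each \(b\in B\) set \(r(b)=\min\{n:b\in F_n\}\); because each \(F_n\) is finite, each value of \(r\) is attained by only finitely many \(b\), so the range \(R=\{r(b):b\in B\}\) must be infinite. Choosing, for each \(n\in R\), a single point \(b_n\in B\) with \(r(b_n)=n\) yields distinct points \(b_n\in F_n\subseteq A_n\).

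Now the rule ``play \(b_n\) in round \(n\) for \(n\in R\), and play nothing otherwise'' is a legal \(G_{<2}\)-counterplay against \((A_n)\), and its outcome \(\{b_n:n\in R\}\) is an infinite subset of \(B\in\mc B\), hence lies in \(\mc B\) by the \(\Gamma\)-like property. So \((A_n)\) is not winning for \(\plI\) in \(G_{<2}\) either, completing the contraposition and therefore the equivalence. The one step requiring care is guaranteeing that the single-point selections still accumulate to an \emph{infinite} set: this is exactly what the counting argument on \(r\) delivers, since the finiteness of each \(F_n\) prevents all of \(B\) from being concentrated in the choices of finitely many rounds, and the \(\Gamma\)-like closure of \(\mc B\) under infinite subsets then secures membership in \(\mc B\).
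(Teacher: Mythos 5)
Your proposal is correct and takes essentially the same approach as the paper: both arguments thin a finite-selection counterplay down to at most one ``fresh'' point per round and then invoke the \(\Gamma\)-like closure of \(\mc B\) under infinite subsets (your \(b_n\) for \(n\in R\) is exactly the paper's \(F_n^\star=\{\min(F_n\setminus\bigcup_{m<n}F_m)\}\), with an arbitrary choice in place of a well-ordering). Your contrapositive framing starts from a winning counterplay for \(\plII\), which neatly sidesteps the finite-union case that the paper's direct version must handle separately, but the underlying construction is the same.
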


\begin{proof}
Assume \(\bigcup\mc A\) is well-ordered.
Given a winning predetermined strategy \(A_n\) for \(\plI\) in
\(G_{<2}(\mc A,\mc B)\), consider \(F_n\in[A_n]^{<\aleph_0}\). We set
\[
  F_n^*
=
  \begin{cases}
    \emptyset
      & \text{ if }
    F_n\setminus\bigcup\{F_m:m<n\}=\emptyset
      \\
    \{\min(
      F_n\setminus\bigcup\{F_m:m<n\}
    )\}
      & \text{ otherwise}
  \end{cases}
\]
Since \(|F_n^*|<2\), we have that \(\bigcup\{F_n^*:n<\omega\}\not\in\mc B\).
In the case that \(\bigcup\{F_n^*:n<\omega\}\) is finite, we immediately
see that \(\bigcup\{F_n:n<\omega\}\) is also finite and therefore not
in \(\mc B\). Otherwise \(\bigcup\{F_n^*:n<\omega\}\not\in\mc B\)
is an infinite subset of \(\bigcup\{F_n:n<\omega\}\), and thus
\(\bigcup\{F_n:n<\omega\}\not\in\mc B\) too. Therefore
\(A_n\) is a winning predetermined strategy for \(\plI\) in
\(G_{fin}(\mc A,\mc B)\) as well.
\end{proof}

\begin{theorem}
Let \(\mc B\) be \(\Gamma\)-like. Then \(\plI\win G_{<2}(\mc A,\mc B)\)
if and only if \(\plI\win G_{fin}(\mc A,\mc B)\).
\end{theorem}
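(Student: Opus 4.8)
The plan is to prove the two directions separately, with only the forward implication requiring genuine work; the construction is a history-dependent upgrade of the pruning argument from the previous (predetermined) theorem.

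For the reverse implication, I would observe that every legal play by \plII\ in \(G_{<2}(\mc A,\mc B)\) is \emph{already} a legal play in \(G_{fin}(\mc A,\mc B)\), since a move of size at most one is in particular a finite set and the two games share the same payoff set \(\mc B\). Hence any winning strategy for \plI\ in \(G_{fin}(\mc A,\mc B)\) defeats every \plII-counterplay available in \(G_{<2}(\mc A,\mc B)\), so \(\plI\win G_{fin}(\mc A,\mc B)\) implies \(\plI\win G_{<2}(\mc A,\mc B)\). This is a one-line observation.

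For the forward implication I would fix a well-ordering of \(\bigcup\mc A\) (as in the previous theorem) and a winning strategy \(\sigma\) for \plI\ in \(G_{<2}(\mc A,\mc B)\), then build a strategy \(\tau\) for \plI\ in \(G_{fin}(\mc A,\mc B)\) that runs a shadow play of \(G_{<2}(\mc A,\mc B)\) driven by \(\sigma\). Given \plII's finite moves \(F_0,\dots,F_{n-1}\), define the pruned moves exactly as before,
\[
  F_m^*
=
  \begin{cases}
    \emptyset
      & \text{if } F_m\setminus\bigcup\{F_k:k<m\}=\emptyset,\\
    \{\min(F_m\setminus\bigcup\{F_k:k<m\})\}
      & \text{otherwise,}
  \end{cases}
\]
and let \(\tau\) respond at round \(n\) with \(\sigma\) applied to the shadow history \(\langle A_0,F_0^*,\dots,A_{n-1},F_{n-1}^*\rangle\), where the \(A_m\) are \plI's own earlier \(\tau\)-moves. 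An easy induction shows these \(A_m\) coincide with \plI's moves in the shadow \(G_{<2}\)-play, so each \(F_m^*\subseteq A_m\) is a legal \(G_{<2}\)-move and the shadow play is consistent with \(\sigma\).

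It then remains to transfer the win. Against any full \plII-counterplay \(F_0,F_1,\dots\) in \(G_{fin}(\mc A,\mc B)\), the shadow play is a complete \(\sigma\)-consistent play of \(G_{<2}(\mc A,\mc B)\), so \(\bigcup\{F_n^*:n<\omega\}\notin\mc B\); the same dichotomy as in the previous proof then applies. If this union is finite, then \(\bigcup\{F_n:n<\omega\}\) is finite too, since each round with \(F_n^*=\emptyset\) contributes no new points, hence it is not in \(\mc B\); and if it is infinite, it is an infinite subset of \(\bigcup\{F_n:n<\omega\}\), so were the latter in \(\mc B\) the \(\Gamma\)-like property would force \(\bigcup\{F_n^*:n<\omega\}\in\mc B\), a contradiction. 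Either way \(\bigcup\{F_n:n<\omega\}\notin\mc B\), so \(\tau\) is winning. The main obstacle is purely bookkeeping: ensuring that \(\tau\)'s actual \(G_{fin}\)-moves and the \(\sigma\)-driven shadow \(G_{<2}\)-moves stay synchronized round by round, so that the terminal shadow play is genuinely \(\sigma\)-consistent and the winning condition transfers.
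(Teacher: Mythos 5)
Your proposal is correct and follows essentially the same route as the paper: the same min-of-new-points pruning of \(\plII\)'s finite moves into shadow \(G_{<2}\)-moves, the same definition \(\tau(s)=\sigma(s^\star)\), and the same finite/infinite dichotomy using the \(\Gamma\)-like property of \(\mc B\) to transfer the loss. The only cosmetic difference is that you spell out the trivial reverse implication and describe the synchronization bookkeeping informally where the paper encodes it in the recursive \(s^\star\) notation.
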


\begin{proof}
Assume \(\bigcup\mc A\) is well-ordered.
Suppose \(\plI\win G_{<2}(\mc A,\mc B)\) is witnessed by the strategy
\(\sigma\). Let \(\tuple{}^\star=\tuple{}\), and for 
\(s\concat\tuple{F}\in([\bigcup\mc A]^{<\aleph_0})^{<\omega}
\setminus\{\tuple{}\}\) let 
\[
  (s\concat\tuple{F})^\star
=
  \begin{cases}
    s^\star\concat\tuple{\emptyset}
      & \text{ if }
    F\setminus\bigcup\ran{s}=\emptyset
      \\
    s^\star\concat\tuple{
      \{\min(F\setminus\bigcup\ran{s})\}
    }
      & \text{ otherwise}
  \end{cases}
\]

We then define the strategy \(\tau\) for \(\plI\) in \(G_{fin}(\mc A,\mc B)\)
by \(\tau(s)=\sigma(s^\star)\). Then given any counterattack
\(\alpha\in([\bigcup\mc A]^{<\aleph_0})^\omega\) by \(\plII\) played against 
\(\tau\), we note that \(\alpha^*=\bigcup\{(\alpha\rest n)^*:n<\omega\}\)
is a counterattack to \(\sigma\), and thus loses.
This means \(B=\bigcup\ran{\alpha^*}\not\in\mc B\).

We consider two cases. The first is the case that \(\bigcup\ran{\alpha^*}\)
is finite. Noting that \(\alpha^*(m)\cap\alpha^*(n)=\emptyset\) whenever
\(m\not=n\), there exists \(N<\omega\) such that 
\(\alpha^*(n)=\emptyset\) for all \(n>N\). As a result,
\(\bigcup\ran{\alpha}=\bigcup\ran{\alpha\rest n}\), and thus
\(\bigcup\ran{\alpha}\) is finite, and therefore not in \(\mc B\).

In the other case, \(\bigcup\ran{\alpha^*}\not\in\mc B\) is an 
infinite subset of \(\bigcup\ran{\alpha}\), and therefore 
\(\bigcup\ran{\alpha}\not\in\mc B\) as well. Thus we have shown
that \(\tau\) is a winning strategy for \(\plI\) in
\(G_{fin}(\mc A,\mc B)\).
\end{proof}

We note that the above proof technique could be
used to establish that perfect-information and
limited-information strategies for \(\plII\) in
\(G_{fin}(\mc A,\mc B)\) may be improved to
be valid in \(G_{<2}(\mc A,\mc B)\), provided
\(\mc B\) is \(\Gamma\)-like. As such,
\(G_{<2}(\mc A,\mc B)\) and \(G_{fin}(\mc A,\mc B)\)
are effectively equivalent games under this hypothesis,
so we will no longer consider \(G_{<2}(\mc A,\mc B)\).

\section{Perfect information and predetermined strategies}

We now demonstrate the following, in the spirit of Pawlikowskii's celebrated
result that a winning strategy for the first player in the Rothberger game
may always be improved to a winning predetermined strategy \cite{MR1279482}.

\begin{theorem}\label{pawlikowskii}
Let \(\mc A\) be almost-\(\Gamma\)-like and \(\mc B\) be \(\Gamma\)-like. Then
\begin{itemize}
\item \(\plI\win G_{fin}(\mc A,\mc B)\) if and only if
\(\plI\prewin G_{fin}(\mc A,\mc B)\), and
\item \(\plI\win G_1(\mc A,\mc B)\) if and only if
\(\plI\prewin G_1(\mc A,\mc B)\).
\end{itemize}
\end{theorem}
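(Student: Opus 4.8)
The reverse implications are immediate, since a predetermined winning strategy is in particular a winning strategy. For each forward implication I argue by contraposition, exploiting the characterizations already established. The equivalence of \(\alpha_2(\mc A,\mc B)\) with \(\plI\notprewin G_1(\mc A,\mc B)\) reduces the first bullet to showing that \(\alpha_2(\mc A,\mc B)\) forbids \(\plI\) from having \emph{any} winning strategy in \(G_1(\mc A,\mc B)\); likewise, the characterization of \(\alpha_4(\mc A,\mc B)\) (which includes the \(G_{<2}\) form \(\alpha_4(\mc A,\mc B)\iff\plI\notprewin G_{<2}(\mc A,\mc B)\)) reduces the second bullet, after the passage from \(G_{fin}\) to \(G_{<2}\) described below, to showing that \(\alpha_4(\mc A,\mc B)\) forbids \(\plI\) from winning \(G_{<2}(\mc A,\mc B)\). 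In each case I will fix an arbitrary strategy \(\sigma\) for \(\plI\) and \emph{actively} construct a counterplay by \(\plII\) defeating it.

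The common setup is a tree \(T\subseteq\omega^{<\omega}\) of \(\sigma\)-responses. Put \(A_{\tuple{}}=\sigma(\tuple{})\); recursively, having defined \(\plI\)'s move \(A_s\in\mc A\) at a node \(s\), use the almost-\(\Gamma\)-like property of \(\mc A\) to extract a countable core \(C_s=\{c_s(k):k<\omega\}\subseteq A_s\), noting \(C_s\in\mc A\) and that all of its cofinite subsets lie in \(\mc A\). A child \(s\concat\tuple{k}\) records \(\plII\) choosing \(c_s(k)\), with \(A_{s\concat\tuple{k}}=\sigma(h)\) for the updated history \(h\); in the \(G_{<2}\) setting I also admit an empty-pick child recording a skip. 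The tree \(T\) is then countable, and because \(\sigma\) is winning, the selection read off along \emph{every} branch of \(T\) fails to lie in \(\mc B\).

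For the \(G_1\) bullet this already suffices. Apply \(\alpha_2(\mc A,\mc B)\) to the countable family \(\{C_s:s\in T\}\) to obtain \(B\in\mc B\) with \(C_s\cap B\) infinite at \emph{every} node. Now descend greedily: at the node \(s_n\) reached after \(n\) rounds, choose a point of \(C_{s_n}\cap B\) distinct from those chosen before, which is possible exactly because the intersection is infinite. This traces a single branch along which \(\plII\)'s selection is an infinite subset of \(B\), hence lies in \(\mc B\) by \(\Gamma\)-likeness — contradicting that \(\sigma\) is winning. So \(\alpha_2(\mc A,\mc B)\) implies \(\plI\) has no winning strategy, and with the contraposition above this gives \(\plI\win G_1(\mc A,\mc B)\Rightarrow\plI\prewin G_1(\mc A,\mc B)\).

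The \(G_{fin}\) bullet is where the real work lies. First, the two equivalences already proved for \(\Gamma\)-like \(\mc B\) — that \(\plI\prewin G_{<2}(\mc A,\mc B)\) iff \(\plI\prewin G_{fin}(\mc A,\mc B)\), and \(\plI\win G_{<2}(\mc A,\mc B)\) iff \(\plI\win G_{fin}(\mc A,\mc B)\) — let me replace \(G_{fin}\) throughout by \(G_{<2}\), so it is enough to improve a winning \(\sigma\) in \(G_{<2}(\mc A,\mc B)\) to a predetermined one. Running the construction above with \(\alpha_4\) in place of \(\alpha_2\) yields \(B\in\mc B\) meeting \(C_s\) at \emph{infinitely many} nodes rather than at all of them, and this weakening is exactly the main obstacle: the \(\alpha_4\)-good nodes are scattered through an \(\omega\)-branching tree with no König's-lemma guarantee that a single branch meets infinitely many of them, and although \(\plII\) may use empty picks to descend without leaving \(B\), such travel is confined to the empty-pick spine and cannot in general be steered onto the scattered good nodes, so the greedy descent of the \(G_1\) case breaks down. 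I expect to resolve this by \emph{bootstrapping}: descending in stages, at each stage re-invoking \(\alpha_4\) on the subtree below the current node to locate a reachable good node and harvest a fresh point, using empty picks together with the diagonalization of the earlier proofs to keep the accumulating selection infinite, and the \(\Gamma\)-like closure to promote the resulting branch-selection to a defeat of \(\sigma\). Controlling the interaction between the successive applications of \(\alpha_4\), so that the harvested points assemble into one \(\sigma\)-consistent branch whose selection genuinely lands in \(\mc B\) — in effect bridging the gap between ``every branch loses'' and ``every transversal loses'' — is the crux and the step I anticipate will demand the most care.
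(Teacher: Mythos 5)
Your argument for the \(G_1\) bullet is correct, and it takes a genuinely different route from the paper: you argue by contraposition through the already-established equivalence \(\alpha_2(\mc A,\mc B)\Leftrightarrow\plI\notprewin G_1(\mc A,\mc B)\), build the countable tree of \(\sigma\)-positions restricted to countable cores \(C_s\), apply \(\alpha_2\) once to the countable family \(\{C_s:s\in T\}\), and steer a single branch by greedily harvesting distinct points of \(C_{s_n}\cap B\). That does defeat an arbitrary strategy \(\sigma\), and the contraposition is legitimate.

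The \(G_{fin}\) bullet, however, is not proved, and the obstacle you flag is a genuine gap rather than a deferred detail. Applying \(\alpha_4\) to the tree yields a witness \(B\) meeting \(C_s\) at infinitely many nodes \(s\) scattered through an \(\omega\)-branching tree, and no single branch need pass through infinitely many of them; the empty-pick moves of \(G_{<2}\) do not help, as you note. Your proposed repair --- re-invoking \(\alpha_4\) on successive subtrees --- does not work as described: each invocation produces a \emph{different} witness \(B\), and a set assembled from one point out of each of infinitely many distinct witnesses has no reason to lie in \(\mc B\); being \(\Gamma\)-like only closes \(\mc B\) under passage to infinite subsets of a single member, not under such recombination. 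The paper sidesteps all of this by arguing in the forward direction, uniformly for both games: normalize a winning strategy \(\sigma\) so that each \(\sigma(s)\) is countable and disjoint from \(\bigcup\ran{s}\), then let \(\plI\) play \emph{every} node of the tree, one per round, via a bijection \(b:\omega^{<\omega}\to\omega\), i.e., \(\tau(n)=\sigma(b(n)^\star)\). Any counterplay \(\alpha\) against \(\tau\) then selects a child at every node and hence determines a branch \(f\in\omega^\omega\); the selections \(\{F_{f\rest n+1}:n<\omega\}\) along that branch form a legal counterplay against \(\sigma\), so their union is not in \(\mc B\), and since it is an infinite subset of \(\bigcup\{\alpha(n):n<\omega\}\), the \(\Gamma\)-likeness of \(\mc B\) forces \(\bigcup\{\alpha(n):n<\omega\}\notin\mc B\) as well. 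This ``play the whole tree'' construction is what you need for the second bullet; it requires no appeal to \(\alpha_2\) or \(\alpha_4\) and would also replace your \(G_1\) argument if you want a single uniform proof.
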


\begin{proof}
We assume \(\plI\win G_{fin}(\mc A,\mc B)\)
and let the symbol \(\dagger\) mean \(<\aleph_0\)
(respectively, \(\plI\win G_1(\mc A,\mc B)\)
and \(\dagger=1\),
and for convenience we assume \(\plII\) plays
singleton subsets of \(\mc A\) rather than elements).
As \(\mc A\) is almost-\(\Gamma\)-like, there is a 
winning strategy \(\sigma\) where
\(|\sigma(s)|=\aleph_0\) and \(\sigma(s)\cap\bigcup\ran{s}=\emptyset\)
(that is, \(\sigma\) never replays the choices of \(\plII\))
for all partial plays \(s\) by \(\plII\).

For each \(s\in\omega^{<\omega}\), suppose 
\(F_{s\rest m}\in[\bigcup A]^{\dagger}\) 
is defined for each \(0<m\leq|s|\).
Then let \(s^\star:|s|\to[\bigcup\mc A]^{\dagger}\)
be defined by
\(s^\star(m)=F_{s\rest m+1}\), and define \(\tau':\omega^{<\omega}\to\mc A\)
by \(\tau'(s)=\sigma(s^\star)\). Finally, set 
\([\sigma(s^\star)]^{\dagger}=\{F_{s\concat\tuple{n}}:n<\omega\}\), and
for some bijection \(b:\omega^{<\omega}\to\omega\) let \(\tau(n)=\tau'(b(n))\)
be a predetermined strategy for \(\plI\) in \(G_{fin}(\mc A,\mc B)\)
(resp. \(G_1(\mc A,\mc B)\)).

Suppose \(\alpha\) is a counterattack by \(\plII\) against \(\tau\), so 
\[
  \alpha(n)
    \in
  [\tau(n)]^{\dagger}
    =
  [\tau'(b(n))]^{\dagger}
    =
  [\sigma(b(n)^\star)]^{\dagger}
\]
It follows that \(\alpha(n)=F_{b(n)\concat\tuple{m}}\) for some \(m<\omega\).
In particular, there is some infinite subset \(W\subseteq\omega\) and \(f\in\omega^\omega\)
such that \(\{\alpha(n):n\in W\}=\{F_{f\rest n+1}:n<\omega\}\).
Note here that \((f\rest n+1)^\star=(f\rest n)^\star\concat\tuple{F_{f\rest n+1}}\).
This shows that \(F_{f\rest n+1}\in[\sigma((f\rest n)^\star)]^{\dagger}\) 
is an attempt by \(\plII\) to defeat \(\sigma\), which fails. Thus 
\(\bigcup\{F_{f\rest n+1}:n<\omega\}=\bigcup\{\alpha(n):n\in W\}\not\in\mc B\),
and since this set is infinite (as \(\sigma\) prevents \(\plII\)
from repeating choices) we have \(\bigcup\{\alpha(n):n<\omega\}\not\in\mc B\) too.
Therefore \(\tau\) is winning.
\end{proof}

Note that the assumption in Theorem \ref{pawlikowskii} that \(\mc A\)
be almost-\(\Gamma\)-like cannot be omitted. In
\cite{MR3467819}
an example of a space \(X^*\) and point \(\infty\in X^*\) 
where \(\plI\win G_1(\mc A,\mc B)\) but
\(\plI\notprewin G_1(\mc A,\mc B)\) is given, where \(\mc A\) is
the set of open neighborhoods of \(\infty\) 
(which are all uncountable), 
and \(\mc B\) is the set \(\Gamma_{X^*,\infty}\) of sequences converging to that point.
(Note that \(G_1(\mc A,\mc B)\) is called \(Gru_{O,P}(X^*,\infty)\) in that
paper, and an equivalent game \(Gru_{K,P}(X)\) is what is directly
studied. In fact, more is shown: \(\plI\) has a winning perfect-information
strategy, but for any natural number \(k\), any strategy that only uses 
the most recent \(k\) moves of \(\plII\) and the round number
can be defeated.)

While \(\mc A\) is often not almost-\(\Gamma\)-like in general,
it may satisfy that property in combination with the selection principles
being considered.

\begin{proposition}\label{auto-asl}
Let \(\mc B\) be \(\Gamma\)-like, \(\mc B\subseteq\mc A\),
and \(\plI\notprewin G_{fin}(\mc A,\mc B)\). Then
\(\mc A\) is almost-\(\Gamma\)-like.
\end{proposition}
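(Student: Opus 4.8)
The plan is to verify the defining property of almost-\(\Gamma\)-likeness directly for an arbitrary \(A\in\mc A\), using the hypothesis on the game applied to the simplest conceivable predetermined strategy. First I would fix \(A\in\mc A\) and have \(\plI\) play the constant predetermined strategy that selects \(A\) in every round; this is a legitimate predetermined strategy precisely because \(A\in\mc A\).

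Since \(\plI\notprewin G_{fin}(\mc A,\mc B)\), this constant strategy is not winning, so \(\plII\) has a counterplay that defeats it: there are finite sets \(F_n\in[A]^{<\aleph_0}\) with \(B=\bigcup\{F_n:n<\omega\}\in\mc B\). I would then propose \(A'=B\) as the witness. Three things must be checked. Being a countable union of finite sets, \(B\) is countable; being an element of the \(\Gamma\)-like collection \(\mc B\), it is infinite; and \(B\subseteq A\) since each \(F_n\subseteq A\). Hence \(A'=B\) is a countably infinite subset of \(A\).

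Finally I would confirm the cofinite-subset condition. If \(A''\) is a cofinite subset of \(A'=B\), then \(A''\) is an infinite subset of \(B\in\mc B\), so \(A''\in\mc B\) because \(\mc B\) is \(\Gamma\)-like, and therefore \(A''\in\mc A\) since \(\mc B\subseteq\mc A\). This exhibits the required witness for \(A\), so \(\mc A\) is almost-\(\Gamma\)-like.

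I do not expect a genuine obstacle here: the constant strategy reduces everything to transferring the \(\Gamma\)-like closure of \(\mc B\) along the inclusion \(\mc B\subseteq\mc A\). The only point demanding a little care is recognizing that \(\mc B\subseteq\mc A\) is exactly what lets the element of \(\mc B\) produced by the game re-enter \(\mc A\), and that the full \(\Gamma\)-like property of \(\mc B\) (not merely almost-\(\Gamma\)-like) is what guarantees that cofinite—indeed, all infinite—subsets of \(B\) remain available.
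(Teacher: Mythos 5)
Your proof is correct and follows essentially the same route as the paper's: apply the constant predetermined strategy $A_n=A$, extract the defeating counterplay to get $B=\bigcup\{F_n:n<\omega\}\in\mc B$ as the countably infinite witness $A'\subseteq A$, and use the $\Gamma$-like property of $\mc B$ together with $\mc B\subseteq\mc A$ to handle cofinite (indeed all infinite) subsets. No differences worth noting.
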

\begin{proof}
Let \(A\in\mc A\), and for all \(n<\omega\) let \(A_n=A\).
Then \(A_n\) is not a winning predetermined strategy for
\(\plI\), so \(\plII\) may choose finite sets
\(B_n\subseteq A_n=A\) such that 
\(A'=\bigcup\{B_n:n<\omega\}\in\mc B\subseteq\mc A\).

It follows that \(A'\subseteq A\) and \(|A'|=\aleph_0\), 
and for any infinite subset
\(A''\subseteq A'\) (in particular, any cofinite subset),
\(A''\in\mc B\subseteq\mc A\). Thus \(\mc A\) is almost-\(\Gamma\)-like.
\end{proof}

Note that in the previous result, 
\(\plI\notprewin G_{fin}(\mc A,\mc B)\) could be weakened
to the choice principle \({\mc A}\choose{\mc B}\): for every
member of \(\mc A\), there is some countable subset belonging to \(\mc B\).

\begin{corollary}
Let \(\mc B\) be \(\Gamma\)-like and \(\mc B\subseteq\mc A\).
Then 
\begin{itemize}
\item \(\plI\win G_{fin}(\mc A,\mc B)\) if and only if
\(\plI\prewin G_{fin}(\mc A,\mc B)\), and
\item \(\plI\win G_{1}(\mc A,\mc B)\) if and only if
\(\plI\prewin G_{1}(\mc A,\mc B)\).
\end{itemize}
\end{corollary}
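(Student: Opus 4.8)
The plan is to read this statement straight off Proposition \ref{auto-asl} and Theorem \ref{pawlikowskii}: the role of Proposition \ref{auto-asl} is precisely to supply the almost-\(\Gamma\)-like hypothesis on \(\mc A\) that Theorem \ref{pawlikowskii} requires, now paid for by the game itself rather than assumed outright. Since a predetermined winning strategy is in particular a winning strategy, the implications \(\plI\prewin G_{fin}(\mc A,\mc B)\Rightarrow\plI\win G_{fin}(\mc A,\mc B)\) and \(\plI\prewin G_1(\mc A,\mc B)\Rightarrow\plI\win G_1(\mc A,\mc B)\) are immediate, so in each bullet only the forward direction needs work, and I would prove it by contraposition.

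For the \(G_{fin}\) statement I would assume \(\plI\notprewin G_{fin}(\mc A,\mc B)\). Proposition \ref{auto-asl} then applies verbatim (its hypotheses that \(\mc B\) is \(\Gamma\)-like and \(\mc B\subseteq\mc A\) are exactly those of the corollary) and yields that \(\mc A\) is almost-\(\Gamma\)-like. With \(\mc A\) almost-\(\Gamma\)-like and \(\mc B\) \(\Gamma\)-like in hand, Theorem \ref{pawlikowskii} gives the biconditional \(\plI\win G_{fin}(\mc A,\mc B)\Leftrightarrow\plI\prewin G_{fin}(\mc A,\mc B)\); since its right-hand side fails by assumption, \(\plI\) has no winning strategy in \(G_{fin}(\mc A,\mc B)\), completing the contrapositive.

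The \(G_1\) statement runs along the same lines, and the one point requiring care — the main, if minor, obstacle — is that Proposition \ref{auto-asl} is phrased in terms of \(G_{fin}\), whereas here I would be starting from \(\plI\notprewin G_1(\mc A,\mc B)\). I would bridge this by observing that a predetermined strategy for \(\plI\) is just a sequence \((A_n)_{n<\omega}\) from \(\mc A\), independent of which game is played, and that any counterplay in which \(\plII\) selects a single point \(a_n\in A_n\) each round is legal in \(G_{fin}(\mc A,\mc B)\) as well (taking \(F_n=\{a_n\}\)); hence a counterplay defeating \((A_n)\) in \(G_1\) also defeats it in \(G_{fin}\), so \(\plI\notprewin G_1(\mc A,\mc B)\) implies \(\plI\notprewin G_{fin}(\mc A,\mc B)\). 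Alternatively, one could invoke the choice principle of the remark following Proposition \ref{auto-asl}, since the constant predetermined strategy \(A_n=A\) being non-winning in \(G_1\) already produces a countable subset of \(A\) lying in \(\mc B\). Once \(\mc A\) is known to be almost-\(\Gamma\)-like, Theorem \ref{pawlikowskii} again supplies \(\plI\win G_1(\mc A,\mc B)\Leftrightarrow\plI\prewin G_1(\mc A,\mc B)\), and the assumed failure of the right-hand side finishes the contraposition.
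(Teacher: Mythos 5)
Your proposal is correct and follows the paper's own argument exactly: contrapose, use Proposition \ref{auto-asl} to obtain that \(\mc A\) is almost-\(\Gamma\)-like, then apply Theorem \ref{pawlikowskii}, bridging the \(G_1\) case via the observation that \(\plI\notprewin G_1(\mc A,\mc B)\) implies \(\plI\notprewin G_{fin}(\mc A,\mc B)\). The only difference is that you spell out this last implication (singleton selections), which the paper leaves implicit.
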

\begin{proof}
Assuming \(\plI\notprewin G_{fin}(\mc A,\mc B)\), we have
\(\plI\notwin G_{fin}(\mc A,\mc B)\) by Proposition \ref{auto-asl}
and Theorem \ref{pawlikowskii}.

Similarly, assuming \(\plI\notprewin G_{1}(\mc A,\mc B)
\Rightarrow\plI\notprewin G_{fin}(\mc A,\mc B)\), we have
\(\plI\notwin G_{1}(\mc A,\mc B)\) by Proposition \ref{auto-asl}
and Theorem \ref{pawlikowskii}.
\end{proof}

This corollary generalizes e.g. Theorems 26 and 30 of \cite{MR1378387}
Theorem 5 of \cite{MR2119791}, and Corollary 36 of \cite{clontzDualPreprint}.

In summary, using the selection principle notation \(S_\star(\mc A,\mc B)\):

\begin{corollary}
Let \(\mc B\) be \(\Gamma\)-like and \(\mc B\subseteq\mc A\).
Then 
\begin{itemize}
\item \(\plI\notwin G_{fin}(\mc A,\mc B)\) if and only if
\(S_{fin}(\mc A,\mc B)\) if and only if
\(\alpha_2(\mc A,\mc B)\), and
\item \(\plI\notwin G_{1}(\mc A,\mc B)\) if and only if
\(S_{1}(\mc A,\mc B)\) if and only if
\(\alpha_4(\mc A,\mc B)\).
\end{itemize}
\end{corollary}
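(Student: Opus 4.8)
The plan is to obtain the two three-term equivalences by gluing together results already in hand, so that essentially no new combinatorics is needed. The three inputs are: the penultimate corollary (itself a consequence of Proposition \ref{auto-asl} and Theorem \ref{pawlikowskii}), which, since \(\mc B\) is \(\Gamma\)-like and \(\mc B\subseteq\mc A\), gives \(\plI\win G_\star(\mc A,\mc B)\Leftrightarrow\plI\prewin G_\star(\mc A,\mc B)\) for \(\star\in\{1,fin\}\); the equivalences of Sections 1 and 2 relating each \(\alpha\)-principle to its game; and Proposition \ref{auto-asl}, which promotes \(\mc A\) to an almost-\(\Gamma\)-like collection out of the mere failure of a predetermined winning strategy.

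First I would settle the leftmost ``if and only if'' in each bullet formally. By definition \(S_\star(\mc A,\mc B)\) abbreviates \(\plI\notprewin G_\star(\mc A,\mc B)\), and a predetermined winning strategy is a fortiori a winning strategy, so \(\plI\prewin G_\star\Rightarrow\plI\win G_\star\) always holds; the converse is precisely the penultimate corollary. Contraposing yields \(\plI\notwin G_\star(\mc A,\mc B)\Leftrightarrow\plI\notprewin G_\star(\mc A,\mc B)=S_\star(\mc A,\mc B)\), which is the desired equivalence.

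It remains to tie \(S_\star\) to the matching \(\alpha\)-principle. The governing statements are the theorems of Sections 1 and 2, but each assumes \(\mc A\) almost-\(\Gamma\)-like, a hypothesis absent here; the point is that it can be recovered from either side. From the selection-principle side this is automatic: \(S_1\) implies \(\plI\notprewin G_{fin}(\mc A,\mc B)\) and \(S_{fin}\) is this statement outright, so Proposition \ref{auto-asl} makes \(\mc A\) almost-\(\Gamma\)-like and the relevant section theorem then delivers the \(\alpha\)-principle. From the \(\alpha\)-side the key observation is a constant-sequence trick: applied to \(A_n=A\), the \(\alpha_2\)-principle returns \(B\in\mc B\) with \(A\cap B\) infinite, and since \(\mc B\) is \(\Gamma\)-like this exhibits a countable subset of \(A\) lying in \(\mc B\subseteq\mc A\) all of whose cofinite subsets again remain in \(\mc B\subseteq\mc A\); hence \(\mc A\) is almost-\(\Gamma\)-like and the section theorem applies in reverse.

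I expect the reverse implication, from an \(\alpha\)-principle to its selection principle, to be the main obstacle, since it is there that almost-\(\Gamma\)-likeness must be produced rather than assumed. The constant-sequence trick does this cleanly whenever the \(\alpha\)-principle guarantees an \emph{infinite} intersection; when only a nonempty intersection is guaranteed the recovery of almost-\(\Gamma\)-likeness is considerably more delicate, and that is the step I would scrutinize most carefully. Once both implications are assembled and chained with the formal equivalence of the second step, the two three-term equivalences follow, and rewriting \(\plI\notprewin G_\star\) as \(S_\star\) is the final cosmetic step.
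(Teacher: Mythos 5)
Your architecture is the intended one: the paper offers no separate proof of this corollary, presenting it as a summary obtained exactly by the chain you describe (the preceding corollary converts \(\plI\notwin G_\star\) into \(S_\star\); Proposition \ref{auto-asl} upgrades \(\mc A\) to almost-\(\Gamma\)-like on the selection-principle side; the theorems of Sections 1 and 2 then supply the \(\alpha\)-principles). Two remarks on the parts that work. First, the statement as printed has \(\alpha_2\) and \(\alpha_4\) interchanged relative to those theorems, which pair \(\alpha_2\) with \(G_1\) and \(\alpha_4\) with \(G_{fin}\); your argument implicitly uses the intended pairing, and you should say so explicitly. Second, your constant-sequence trick does correctly dispose of \(\alpha_2\Rightarrow S_1\): from \(\alpha_2\) applied to \(A_n=A\) one gets \(B\in\mc B\) with \(A\cap B\) infinite, hence a countably infinite subset of \(A\) lying in \(\mc B\subseteq\mc A\) along with all of its cofinite subsets, so \(\mc A\) is almost-\(\Gamma\)-like and the Section 1 theorem applies. (Alternatively, that direction of the Section 1 theorem never uses almost-\(\Gamma\)-likeness of \(\mc A\), so no trick is needed if one reads its proof rather than its statement.)

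The step you flag as delicate, recovering almost-\(\Gamma\)-likeness of \(\mc A\) from \(\alpha_4\), is a genuine gap, and it cannot be closed: under only the stated hypotheses the implication \(\alpha_4(\mc A,\mc B)\Rightarrow S_{fin}(\mc A,\mc B)\) fails. Take \(\mc B\) to be the collection of infinite subsets of \(\omega\) and \(\mc A=\mc B\cup\{A^*\}\), where \(A^*\) is any infinite set with \(A^*\cap\omega=\{0\}\). Then \(\mc B\) is \(\Gamma\)-like and \(\mc B\subseteq\mc A\), and \(\alpha_4(\mc A,\mc B)\) holds: if infinitely many \(A_n\) lie in \(\mc B\), diagonalize to choose distinct \(a_n\in A_n\) and let \(B=\{a_n:n<\omega\}\); otherwise cofinitely many \(A_n\) equal \(A^*\) and \(B=\omega\) meets each of them in \(\{0\}\). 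Yet \(\plI\prewin G_{fin}(\mc A,\mc B)\) via the predetermined strategy that plays \(A^*\) every round, since any union of finite subsets of \(A^*\) meets \(\omega\) only in \(\{0\}\) and so is never an infinite subset of \(\omega\). Hence the bullet pairing \(G_{fin}\) with \(\alpha_4\) genuinely requires an additional hypothesis --- that \(\mc A\) be almost-\(\Gamma\)-like, or at least the choice principle noted after Proposition \ref{auto-asl} (every member of \(\mc A\) has a countable subset belonging to \(\mc B\)) --- and no argument along your lines can manufacture it from \(\alpha_4\) alone. The bullet pairing \(G_1\) with \(\alpha_2\) survives, because there the \(\alpha\)-principle is strong enough to yield the needed hypothesis by your constant-sequence trick.
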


\section{Disjoint selections}

In each \(\alpha_i(\mc A,\mc B)\) principle, it is not required for the collection
\(\{A_n:n<\omega\}\) to be pairwise disjoint. However, in many cases it may as
well be.

\begin{definition}
For \(i\in\{1,2,3,4\}\) let \(\alpha_{i.1}(\mc A,\mc B)\) denote the claim that
\(\alpha_i(\mc A,\mc B)\) holds provided the collection \(\{A_n:n<\omega\}\)
is pairwise disjoint.
\end{definition}

Of course, \(\alpha_i(\mc A,\mc B)\) implies \(\alpha_{i.1}(\mc A,\mc B)\).
It's also immediate that \(\alpha_{i.1}(\mc A,\mc B)\) implies
\(\alpha_{i.1+1}(\mc A,\mc B)\) for the same reason that \(\alpha_i(\mc A,\mc B)\)
implies \(\alpha_{i+1}(\mc A,\mc B)\). 

We take advantage of the following lemma.

\begin{lemma}[Lemma 1.2 of \cite{MR1195504}]
Given a family \(\{A_n:n<\omega\}\) of infinite sets, there exist infinite subsets
\(A_n'\subseteq A_n\) such that \(\{A_n':n<\omega\}\) is pairwise disjoint.
\end{lemma}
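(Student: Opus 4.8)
The plan is to disjointify by selecting a doubly-indexed array of distinct elements, one per stage, via a bookkeeping enumeration of \(\omega\times\omega\). Concretely, I will produce elements \(a_{n,k}\in A_n\) for all \(n,k<\omega\) such that the map \((n,k)\mapsto a_{n,k}\) is injective, and then set \(A_n'=\{a_{n,k}:k<\omega\}\). Injectivity delivers everything at once: each \(A_n'\) is infinite because it is the image of an infinite set under an injection, and \(A_m'\cap A_n'=\emptyset\) for \(m\neq n\) because no value is ever repeated across index pairs.

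First I would fix a bijection \(\pi:\omega\to\omega\times\omega\) so that the index pairs can be processed in a single \(\omega\)-sequence, and --- following the convention used elsewhere in this paper --- well-order \(\bigcup_{n<\omega}A_n\) so that no per-stage appeal to choice is required. I then define \(a_{n,k}\) by recursion on the stage \(s\) with \(\pi(s)=(n,k)\): let \(a_{n,k}\) be the least element of \(A_n\) distinct from every \(a_{m,j}\) chosen at a stage \(t<s\). The recursion is well-founded and the choice is always available, since at stage \(s\) only finitely many elements have been committed while \(A_n\) is infinite, so \(A_n\) minus that finite set is nonempty and hence has a least element.

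The only point requiring care --- and the nearest thing to an obstacle --- is the bookkeeping: I must confirm that interleaving the selections according to \(\pi\) (rather than completing \(A_0'\) before beginning \(A_1'\), which would leave the later sets empty) still feeds infinitely many elements into every \(A_n'\). This is automatic, because for each fixed \(n\) the set of stages \(s\) whose first coordinate is \(n\) is infinite, so \(A_n'\) receives infinitely many necessarily distinct elements. Beyond verifying this, the argument is a routine diagonalization with no essential difficulty.
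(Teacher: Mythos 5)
Your argument is correct: the interleaved recursion via a bijection \(\pi:\omega\to\omega\times\omega\) keeps the committed set finite at every stage, so a fresh element of \(A_n\) is always available, and injectivity of \((n,k)\mapsto a_{n,k}\) gives both the infinitude of each \(A_n'\) and their pairwise disjointness. Note that the paper does not prove this lemma at all --- it is quoted as Lemma 1.2 of the cited Nyikos paper --- so there is no in-paper proof to compare against; your construction is the standard disjointification argument and is exactly what one would expect that source to contain.
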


\begin{proposition}\label{pointOne}
Let \(\mc A\) be \(\Gamma\)-like.
For \(i\in\{2,3,4\}\), \(\alpha_i(\mc A,\mc B)\) is equivalent to
\(\alpha_{i.1}(\mc A,\mc B)\).
\end{proposition}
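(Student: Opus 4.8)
The plan is to prove the nontrivial direction, namely that $\alpha_{i.1}(\mc A,\mc B)$ implies $\alpha_i(\mc A,\mc B)$, since the converse is immediate. Given an arbitrary family $\{A_n:n<\omega\}$ with $A_n\in\mc A$ (not necessarily pairwise disjoint), I would first invoke the cited Lemma~1.2 of \cite{MR1195504} to extract infinite subsets $A_n'\subseteq A_n$ such that $\{A_n':n<\omega\}$ is pairwise disjoint. Because $\mc A$ is $\Gamma$-like, each infinite subset $A_n'$ of $A_n\in\mc A$ again belongs to $\mc A$, so the disjoint family $\{A_n':n<\omega\}$ is a legitimate input to $\alpha_{i.1}(\mc A,\mc B)$.

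Next I would apply $\alpha_{i.1}(\mc A,\mc B)$ to the disjoint family $\{A_n':n<\omega\}$ to obtain $B\in\mc B$ witnessing the appropriate intersection condition against the $A_n'$. The key observation is then simply that $A_n'\subseteq A_n$ forces $A_n'\cap B\subseteq A_n\cap B$, so any intersection property that holds for the pairs $(A_n',B)$ is inherited by the pairs $(A_n,B)$. Concretely, for $i=2$, ``$A_n'\cap B$ infinite for all $n$'' gives ``$A_n\cap B$ infinite for all $n$''; for $i=3$, ``$A_n'\cap B$ infinite for infinitely-many $n$'' gives the same for $A_n\cap B$; and for $i=4$, ``$A_n'\cap B$ nonempty for infinitely-many $n$'' gives ``$A_n\cap B$ nonempty for infinitely-many $n$.'' In each case the same $B$ witnesses $\alpha_i(\mc A,\mc B)$ for the original family.

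The mild subtlety, and the step I would flag as needing care, is making sure the reduction is uniform across $i\in\{2,3,4\}$: since each $\alpha_i$ condition only asks for a lower bound on $|A_n'\cap B|$ (either infinitely many points, or nonemptiness, possibly for infinitely-many indices), and since the monotonicity $A_n'\cap B\subseteq A_n\cap B$ only ever \emph{increases} these intersections, the witness transfers without modification. This is precisely why $\alpha_1$ is excluded: the $\alpha_1$ condition demands $A_n\cap B$ be \emph{cofinite} in $A_n$, which is an upper-bound-sensitive (co-size) requirement, and cofiniteness in the smaller set $A_n'$ says nothing about cofiniteness in the larger set $A_n$. Thus the proof is a short argument combining the disjointification lemma, the $\Gamma$-like closure of $\mc A$ under infinite subsets, and the monotonicity of intersection, with no real obstacle beyond verifying the direction of each inequality.
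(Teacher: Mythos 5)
Your proof is correct and follows exactly the paper's argument: disjointify via Lemma 1.2 of \cite{MR1195504}, use the $\Gamma$-like property to keep the $A_n'$ in $\mc A$, and transfer the witness $B$ back by monotonicity of intersection. Your added remark on why $i=1$ is excluded is a nice clarification but the substance of the argument is the same.
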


\begin{proof}
Assume \(\alpha_{i.1}(\mc A,\mc B)\).
Let \(A_n\in\mc A\). By applying the previous lemma, we have
\(\{A_n':n<\omega\}\) pairwise disjoint with each \(A_n'\) being
an infinite subset of \(A_n\). Since \(\mc A\) is \(\Gamma\)-like,
\(A_n'\in\mc A\), so we have a witness \(B\in\mc B\) such that
\(A_n'\cap B\) satisfies \(\alpha_{i.1}(\mc A,\mc B)\) for all \(n<\omega\).
Since \(A_n'\subseteq A_n\), it follows that \(A_n\cap B\) satisfies
\(\alpha_{i}(\mc A,\mc B)\) for all \(n<\omega\).
\end{proof}

It's also true that \(\alpha_1(\Gamma_{X,x},\Gamma_{X,x})\)
is equivalent to \(\alpha_{1.1}(\Gamma_{X,x},\Gamma_{X,x})\),
which is captured by the following theorem.

\begin{theorem}
Let \(\mc A\) be a \(\Gamma\)-like collection closed under finite unions
and \(\mc A\subseteq\mc B\).
Then \(\alpha_1(\mc A,\mc B)\) is equivalent to
\(\alpha_{1.1}(\mc A,\mc B)\).
\end{theorem}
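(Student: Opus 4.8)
The forward implication $\alpha_1(\mc A,\mc B)\Rightarrow\alpha_{1.1}(\mc A,\mc B)$ is immediate, so the plan is to assume $\alpha_{1.1}(\mc A,\mc B)$ and, given arbitrary $A_n\in\mc A$ for $n<\omega$ (not necessarily disjoint), produce $B\in\mc B$ with $A_n\cap B$ cofinite in $A_n$ for all $n$. The naive approach used in Proposition~\ref{pointOne} --- disjointify to obtain infinite $A_n'\subseteq A_n$ and apply $\alpha_{1.1}$ --- fails here, because cofiniteness of $A_n'\cap B$ in $A_n'$ says nothing about the points of $A_n\setminus A_n'$, and cofiniteness does not transfer upward to supersets. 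This mismatch is the main obstacle, and resolving it is exactly where the extra hypotheses (closure under finite unions and $\mc A\subseteq\mc B$) will be spent.

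The key idea is to disjointify by telescoping rather than by passing to arbitrary subsets. First I would set $T_n=A_n\setminus\bigcup_{m<n}A_m$, so that $\{T_n:n<\omega\}$ is pairwise disjoint and, by the usual telescoping identity $\bigcup_{m\le n}T_m=\bigcup_{m\le n}A_m$, one has $A_n\subseteq\bigcup_{m\le n}T_m$. The point is that each $A_n$ is \emph{covered} by finitely many of the disjoint pieces, so controlling $T_m\setminus B$ for each $m$ controls $A_n\setminus B$: indeed $A_n\setminus B\subseteq\bigcup_{m\le n}(T_m\setminus B)$, a finite union. Thus it suffices to find $B\in\mc B$ with $T_m\setminus B$ finite for every $m$ for which $T_m$ is infinite, since the pieces with $T_m$ finite contribute nothing ($T_m\setminus B\subseteq T_m$ is already finite).

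Let $P=\{m<\omega:|T_m|\ge\aleph_0\}$. Since $T_0=A_0\in\mc A$ is infinite, $0\in P$, so $P\ne\emptyset$; and for $m\in P$ the set $T_m$ is an infinite subset of $A_m\in\mc A$, hence $T_m\in\mc A$ because $\mc A$ is $\Gamma$-like. I would then split into two cases. If $P$ is infinite, re-index $\{T_m:m\in P\}$ as a countably infinite pairwise disjoint family in $\mc A$ and apply $\alpha_{1.1}(\mc A,\mc B)$ to obtain $B\in\mc B$ with $T_m\setminus B$ finite for all $m\in P$. If $P$ is finite, instead take $B=\bigcup_{m\in P}T_m$, which lies in $\mc A\subseteq\mc B$ since $\mc A$ is closed under finite unions; here $T_m\setminus B=\emptyset$ for every $m\in P$. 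In either case $T_m\setminus B$ is finite for all $m\in P$ while $T_m$ is finite for $m\notin P$, so the covering relation $A_n\setminus B\subseteq\bigcup_{m\le n}(T_m\setminus B)$ exhibits $A_n\setminus B$ as a finite union of finite sets. Hence $A_n\cap B$ is cofinite in $A_n$ for every $n<\omega$, and $B$ witnesses $\alpha_1(\mc A,\mc B)$.
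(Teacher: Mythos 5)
Your proof is correct and takes essentially the same approach as the paper: disjointify by telescoping differences so that each \(A_n\) is covered by finitely many disjoint pieces, feed the infinite pieces to \(\alpha_{1.1}\), and use closure under finite unions together with \(\mc A\subseteq\mc B\) to handle the degenerate case where only finitely many pieces are infinite. Your organization (defining all \(T_m\) at once and discarding the finite ones, rather than the paper's inductive selection of indices \(n'\)) is a cleaner rendering of the same idea.
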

 
\begin{proof}
Let \(A_n\in\mc A\) and assume \(\alpha_{1.1}(\mc A,\mc B)\).
To apply the assumption, we will define a pairwise disjoint
collection \(\{A_n':n<\omega\}\). First let \(0'=0\) and \(A_0'=A_0\).
Then suppose \(m'\geq m\) and \(A_m'\subseteq A_{m'}\subseteq\bigcup_{i\leq m}A_i'\) 
are defined for all \(m\leq n\).

If \(A_k\setminus\bigcup_{m\leq n}A_m'\) is finite for \(k>n'\), let
\(B=\bigcup_{m\leq n'}A_m\in\mc A\subseteq\mc B\). This \(B\) then witnesses 
\(\alpha_1(\mc A,\mc B)\) since \(A_k\setminus B\) is finite for all \(k<\omega\).

Otherwise pick the minimal \((n+1)'>n\) where 
\(A_{n+1}'=A_{(n+1)'}\setminus\bigcup_{m\leq n}A_m'\) is infinite.
It follows that \(A_{n+1}'\subseteq A_{(n+1)'}\subseteq \bigcup_{m\leq n+1}A_m'\).
By construction, \(\{A_n':n<\omega\}\) is a pairwise disjoint collection of
members of \(\mc A\), and we may
apply \(\alpha_{1.1}(\mc A,\mc B)\) to obtain \(B\in\mc B\) where
\(A_n'\setminus B\) is finite for all \(n<\omega\).

Finally let \(k<\omega\). If \(k=n'\) for some \(n<\omega\), then 
\(A_k\setminus B=A_{n'}\setminus B\subseteq (\bigcup_{m\leq n}A_m')\setminus B\) 
is finite.
Otherwise, \(n'<k<(n+1)'\) for some \(n<\omega\).
Then 
\((A_k\setminus\bigcup_{m\leq n}A_m')\setminus B\subseteq A_k\setminus\bigcup_{m\leq n}A_m'\)
is finite, and 
\((A_k\cap\bigcup_{m\leq n}A_m')\setminus B\subseteq(\bigcup_{m\leq n}A_m')\setminus B\)
is finite, showing \(A_k\setminus B\) is finite.
\end{proof}

Another fractional version of these \(\alpha\)-principles is given as
\(\alpha_{1.5}\) in \cite{MR1195504}, defined in general as follows.

\begin{definition}
Let \(\alpha_{1.5}(\mc A,\mc B)\) be the assertion that when \(A_n\in\mc A\)
and \(\{A_n:n<\omega\}\) is pairwise disjoint, then there exists \(B\in\mc B\)
such that \(A_n\cap B\) is cofinite in \(A_n\) for infinitely-many \(n<\omega\).
\end{definition}

It's immediate from their definitions that 
\(\alpha_{1.1}(\mc A,\mc B)\) implies \(\alpha_{1.5}(\mc A,\mc B)\), which
implies \(\alpha_{3.1}(\mc A,\mc B)\).
Nyikos originally showed that \(\alpha_{1.5}(\Gamma_{X,x},\Gamma_{X,x})\) implies
\(\alpha_2(\Gamma_{X,x},\Gamma_{X,x})\); this result generalizes as follows.

\begin{theorem}
Let \(\mc A\) be a \(\Gamma\)-like collection closed under finite unions.
Then \(\alpha_{1.5}(\mc A,\mc B)\) implies
\(\alpha_{2}(\mc A,\mc B)\).
\end{theorem}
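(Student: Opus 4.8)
The plan is to reduce to pairwise disjoint families and then apply \(\alpha_{1.5}(\mc A,\mc B)\) to a cleverly re-assembled disjoint family. Since \(\mc A\) is \(\Gamma\)-like, Proposition~\ref{pointOne} tells us that \(\alpha_2(\mc A,\mc B)\) is equivalent to \(\alpha_{2.1}(\mc A,\mc B)\), so it suffices to verify the latter. Thus I would start with a pairwise disjoint family \(\{A_n:n<\omega\}\subseteq\mc A\) and aim to produce \(B\in\mc B\) with \(A_n\cap B\) infinite for every \(n\).

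First I would break each \(A_n\) into infinitely many infinite pieces, writing \(A_n=\bigsqcup_{j<\omega}A_n^j\) with each \(A_n^j\) infinite; since \(\mc A\) is \(\Gamma\)-like, every \(A_n^j\) lies in \(\mc A\), and because the \(A_n\) are pairwise disjoint the entire doubly-indexed collection \(\{A_n^j:n,j<\omega\}\) is pairwise disjoint. The key construction is the ``staircase'' family \(C_m=\bigcup_{n\le m}A_n^m\) for \(m<\omega\). Each \(C_m\) is a finite union of members of \(\mc A\), hence belongs to \(\mc A\) by the hypothesis that \(\mc A\) is closed under finite unions; and the \(C_m\) are pairwise disjoint because distinct \(C_m\) use pieces with distinct superscripts. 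Applying \(\alpha_{1.5}(\mc A,\mc B)\) to \(\{C_m:m<\omega\}\) then yields \(B\in\mc B\) for which \(C_m\cap B\) is cofinite in \(C_m\) for all \(m\) in some infinite set \(G\subseteq\omega\).

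To finish, fix \(n<\omega\) and choose any \(m\in G\) with \(m\ge n\), which exists since \(G\) is infinite. Then \(A_n^m\subseteq C_m\) and \(C_m\setminus B\) is finite, so \(A_n^m\setminus B\) is finite and hence \(A_n^m\cap B\) is infinite; since \(A_n^m\subseteq A_n\) this forces \(A_n\cap B\) to be infinite, establishing \(\alpha_{2.1}(\mc A,\mc B)\) and therefore \(\alpha_2(\mc A,\mc B)\). The main obstacle, and the reason for the staircase design, is that \(\alpha_{1.5}\) only guarantees a cofinite intersection with \emph{infinitely many} of the \(C_m\) with no control over \emph{which} ones; bundling a fresh piece of each of \(A_0,\dots,A_m\) into \(C_m\) ensures that any sufficiently late good index \(m\) simultaneously captures an infinite chunk of a given \(A_n\), so that the uncontrolled ``infinitely many'' is automatically spread across every \(A_n\).
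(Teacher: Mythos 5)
Your proof is correct and follows essentially the same route as the paper's: reduce to \(\alpha_{2.1}\) via Proposition~\ref{pointOne}, partition each \(A_n\) into infinitely many infinite pieces, regroup them into a pairwise disjoint family where the \(m\)-th set contains one fresh piece of each of \(A_0,\dots,A_m\), and apply \(\alpha_{1.5}\). Your ``staircase'' \(C_m=\bigcup_{n\le m}A_n^m\) is just a relabeling of the paper's anti-diagonal \(A_n'=\bigcup\{A_{i,j}:i+j=n\}\), and the concluding argument is identical.
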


\begin{proof}
We assume \(\alpha_{1.5}(\mc A,\mc B)\) and demonstrate
\(\alpha_{2.1}(\mc A,\mc B)\), which is equivalent to
\(\alpha_{2}(\mc A,\mc B)\) by Proposition \ref{pointOne}.
So let \(A_n\in\mc A\) such that \(\{A_n:n<\omega\}\) is pairwise-disjoint.

We may partition each \(A_n\) into \(\{A_{n,m}:m<\omega\}\) with
\(A_{n,m}\in\mc A\) for all \(m<\omega\). 
Let \(A_n'=\bigcup\{A_{i,j}:i+j=n\}\in\mc A\);
since \(\{A_n':n<\omega\}\) is pairwise disjoint, we may apply
\(\alpha_{1.5}(\mc A,\mc B)\) to obtain \(B\in\mc B\) where
\(A_n'\cap B\) is cofinite in \(A_n'\) for infinitely-many \(n<\omega\).

Then for \(n<\omega\), choose \(N\geq n\) with \(A_N'\cap B\) cofinite in \(A_N'\).
Then \(A_{n,N-n}\subseteq A_N'\), 
so \(A_{n,N-n}\cap B\) is cofinite in \(A_{n,N-n}\), in particular,
\(A_{n,N-n}\cap B\) is infinite.
Therefore \(A_n\cap B\) is infinite, and we have shown
\(\alpha_{2.1}(\mc A,\mc B)\).
\end{proof}

\begin{corollary}
Let \(\mc A\) be a \(\Gamma\)-like collection closed under finite unions.
Then \(\alpha_x(\mc A,\mc B)\) implies \(\alpha_y(\mc A,\mc B)\) for
\(1<x\leq y\). Additionally, if \(\mc A\subseteq\mc B\), then 
\(\alpha_x(\mc A,\mc B)\) implies \(\alpha_y(\mc A,\mc B)\) for
\(1\leq x\leq y\).
\end{corollary}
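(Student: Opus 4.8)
My plan is to reduce the whole statement to repeated use of the single-step implication recorded in the introduction: when every member of \(\mc A\) is infinite, \(\alpha_n(\mc A,\mc B)\) implies \(\alpha_{n+1}(\mc A,\mc B)\). Because \(\mc A\) is \(\Gamma\)-like, every \(A\in\mc A\) has \(|A|\geq\aleph_0\), so this passage is available for each \(n\in\{1,2,3\}\); composing the relevant instances then gives \(\alpha_x(\mc A,\mc B)\Rightarrow\alpha_{x+1}(\mc A,\mc B)\Rightarrow\cdots\Rightarrow\alpha_y(\mc A,\mc B)\) for any \(x\leq y\) in \(\{1,2,3,4\}\). For the first assertion \(1<x\leq y\) this already suffices, so I would dispose of it in a single line of chaining.

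For the second assertion the only genuinely new endpoint is \(x=1\), and once \(\alpha_1(\mc A,\mc B)\Rightarrow\alpha_2(\mc A,\mc B)\) is secured the tail \(\alpha_2\Rightarrow\cdots\Rightarrow\alpha_y\) is handled as above. I would secure this base step in the way that makes the corollary a genuine consequence of the section's work rather than of the introduction alone: \(\alpha_1(\mc A,\mc B)\) implies \(\alpha_{1.1}(\mc A,\mc B)\) at once, \(\alpha_{1.1}(\mc A,\mc B)\) implies \(\alpha_{1.5}(\mc A,\mc B)\) by the remark following the definition of \(\alpha_{1.5}\), and the preceding (Nyikos-type) theorem supplies \(\alpha_{1.5}(\mc A,\mc B)\Rightarrow\alpha_2(\mc A,\mc B)\). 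This last step is exactly where closure of \(\mc A\) under finite unions is spent; the hypothesis \(\mc A\subseteq\mc B\) is what licenses invoking the earlier equivalence \(\alpha_1(\mc A,\mc B)\Leftrightarrow\alpha_{1.1}(\mc A,\mc B)\), even though only its trivial forward half is logically needed here.

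The part I expect to require the most care is conceptual rather than computational: one must be clear that these forward implications are the \emph{entire} content, since the reverse implications among \(\alpha_2,\alpha_3,\alpha_4\) are \emph{not} delivered by the available machinery. The splitting-and-rediagonalizing argument behind \(\alpha_{1.5}\Rightarrow\alpha_2\) works only because a cofinite intersection with a finite union \(\bigcup_{i+j=k}A_{i,j}\) forces an infinite intersection with each constituent \(A_{i,j}\); starting instead from an ``infinite'' or ``nonempty'' hypothesis (as in \(\alpha_3\) or \(\alpha_4\)) gives no control over individual pieces, so no upward collapse is possible. Thus the only delicate point is correctly seating \(\alpha_1\) at the foot of the chain through \(\alpha_{1.5}\), and confirming that the \(\Gamma\)-like hypothesis alone validates every elementary step \(\alpha_n\Rightarrow\alpha_{n+1}\); the remainder is routine chaining, together with Proposition~\ref{pointOne} if one prefers to phrase the intermediate steps via the pairwise-disjoint versions.
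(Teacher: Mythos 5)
The paper states this corollary without proof; your argument --- chaining the elementary implications \(\alpha_n(\mc A,\mc B)\Rightarrow\alpha_{n+1}(\mc A,\mc B)\) (valid since \(\Gamma\)-like collections contain only infinite sets) for the case \(1<x\leq y\), and seating \(x=1\) via \(\alpha_1\Rightarrow\alpha_{1.1}\Rightarrow\alpha_{1.5}\Rightarrow\alpha_2\) using the preceding Nyikos-type theorem --- is exactly the evident intended derivation and is correct. Your side observation that \(\alpha_1\Rightarrow\alpha_2\) already follows from the introductory remark alone, so that the hypothesis \(\mc A\subseteq\mc B\) is not logically required for the second clause, is also accurate.
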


For this paragraph we adopt the conventional assumption that
\(\Gamma_{X,x}\) is restricted to countable sets.
Nyikos showed a consistent example
where \(\alpha_2(\Gamma_{X,x},\Gamma_{X,x})\) fails to imply
\(\alpha_{1.5}(\Gamma_{X,x},\Gamma_{X,x})\), and a consistent example where
\(\alpha_{1.5}(\Gamma_{X,x},\Gamma_{X,x})\) fails to imply
\(\alpha_{1}(\Gamma_{X,x},\Gamma_{X,x})\) \cite{MR1195504}.
On the other hand, Dow showed that \(\alpha_2(\Gamma_{X,x},\Gamma_{X,x})\)
implies \(\alpha_{1}(\Gamma_{X,x},\Gamma_{X,x})\) in the Laver model
for the Borel conjecture \cite{MR975638}; the author conjectures
that this model (specifically, the fact that
every \(\omega\)-splitting family contains an \(\omega\)-splitting
family of size less than \(\mathfrak b\) in this model) witnesses
an affirmative answer to the following question.

\begin{definition}
A \(\Gamma\)-like collection is \term{strongly-\(\Gamma\)-like}
if the collection is closed under finite unions and each member
is countable.
\end{definition}

\begin{question}
Let \(\mc A\) be strongly-\(\Gamma\)-like.
Is it consistent that \(\alpha_2(\mc A,\mc A)\)
implies \(\alpha_1(\mc A,\mc A)\)?
\end{question}

\section{Conclusion}

We conclude with the following easy result, and a couple questions.

\begin{proposition}
Let \(\mc B\) be \(\Gamma\)-like. Then \(\alpha_1(\mc A,\mc B)\) holds if and only
if \(\plI\notprewin G_{cf}(\mc A,\mc B)\).
\end{proposition}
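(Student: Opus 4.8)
The two implications are essentially dual to one another, so the plan is to exploit the direct correspondence between a legal move by \(\plII\) in \(G_{cf}(\mc A,\mc B)\) and the defining condition of \(\alpha_1(\mc A,\mc B)\). The key observation is that if \(\plII\) responds to \(A_n\) with a cofinite subset \(C_n\subseteq A_n\) in each round and we set \(B=\bigcup_{n<\omega}C_n\), then \(C_n\subseteq A_n\cap B\); conversely, whenever \(A_n\cap B\) is cofinite in \(A_n\) it is itself a legal move for \(\plII\). Both directions then amount to unwinding the definitions around this single observation.

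For \(\plI\notprewin G_{cf}(\mc A,\mc B)\Rightarrow\alpha_1(\mc A,\mc B)\), I would begin with an arbitrary sequence \(A_n\in\mc A\) and regard it as a predetermined strategy for \(\plI\). Since this strategy is not winning, \(\plII\) has a counterplay given by cofinite subsets \(C_n\subseteq A_n\) with \(B=\bigcup_{n<\omega}C_n\in\mc B\). Taking this \(B\) as the \(\alpha_1\)-witness, I would note that \(C_n\subseteq B\) forces \(A_n\setminus B\subseteq A_n\setminus C_n\), which is finite; hence \(A_n\cap B\) is cofinite in \(A_n\) for every \(n\), which is exactly \(\alpha_1(\mc A,\mc B)\). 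This direction is clean and uses nothing beyond the definitions.

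For the converse, I would fix an arbitrary predetermined strategy \(A_n\in\mc A\) for \(\plI\) and apply \(\alpha_1(\mc A,\mc B)\) to obtain \(B\in\mc B\) with \(A_n\cap B\) cofinite in \(A_n\) for all \(n\). Then \(\plII\) may legally respond with \(C_n=A_n\cap B\) in each round, and \(\bigcup_{n<\omega}C_n\subseteq B\). The only point requiring care — and where the hypothesis on \(\mc B\) is spent — is verifying that this union actually lands in \(\mc B\). Because \(\mc B\) is \(\Gamma\)-like, it suffices to check that \(\bigcup_{n<\omega}C_n\) is an infinite subset of \(B\in\mc B\); infiniteness follows as soon as some \(A_n\) is infinite (so that \(A_n\cap B\), being cofinite in \(A_n\), is infinite), which holds in the intended setting where the members of \(\mc A\) are infinite, cf. the remark following the first definition. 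Granting this, \(\bigcup_{n<\omega}C_n\in\mc B\), so \(\plII\) defeats the predetermined strategy and \(\plI\notprewin G_{cf}(\mc A,\mc B)\).

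The hard part, then, is not either implication in isolation but the book-keeping in the forward direction that guarantees \(\plII\)'s union is infinite and therefore inherited into \(\mc B\) via \(\Gamma\)-likeness; everything else is a transparent translation between the cofinite selection and the cofinite-intersection condition of \(\alpha_1\).
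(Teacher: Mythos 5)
Your proof is correct and follows essentially the same route as the paper's: both directions are the direct translation between cofinite selections and the cofinite-intersection condition of \(\alpha_1\), with the \(\Gamma\)-likeness of \(\mc B\) spent exactly where you spend it, namely passing from \(B\in\mc B\) to the infinite subset \(\bigcup_{n<\omega}C_n\). You are in fact slightly more careful than the paper at the one delicate point (the infiniteness of that union, which requires the members of \(\mc A\) to be infinite, as in the intended setting); the paper simply asserts it.
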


\begin{proof}
We first assume \(\alpha_1(\mc A,\mc B)\) and let \(A_n\in\mc A\) for \(n<\omega\)
define a predetermined strategy for \(\plI\). By \(\alpha_1(\mc A,\mc B)\), we
immediately obtain \(B\in\mc B\) such that \(|A_n\setminus B|<\aleph_0\). Thus
\(B_n=A_n\cap B\) is a cofinite choice from \(A_n\), and 
\(B'=\bigcup\{B_n:n<\omega\}\) is an infinite subset of \(B\),
so \(B'\in\mc B\). Thus \(\plII\) may defeat \(\plI\) by choosing
\(B_n\subseteq A_n\) each round, witnessing \(\plI\notprewin G_{cf}(\mc A,\mc B)\).

On the other hand, let \(\plI\notprewin G_{cf}(\mc A,\mc B)\). Given \(A_n\in\mc A\)
for \(n<\omega\), we note that \(\plII\) may choose a cofinite subset \(B_n\subseteq A_n\)
such that \(B=\bigcup\{B_n:n<\omega\}\in\mc B\). Then \(B\) witnesses \(\alpha_1(\mc A,\mc B)\)
since \(|A_n\setminus B|\leq|A_n\setminus B_n|\leq\aleph_0\).
\end{proof}

\begin{question}
Is there a game-theoretic characterization of \(\alpha_3(\mc A,\mc B)\)?
\end{question}

Noting that \(\plI\win G_1(\Gamma_X,\Gamma_X)\) if and only if
\(\plI\win G_{fin}(\Gamma_X,\Gamma_X)\) \cite{MR2417134}, but
the same is not true of \(G_\star(\Gamma_{X,x},\Gamma_{X,x})\)
(i.e. there are \(\alpha_4\) spaces that are not \(\alpha_2\)
\cite{shakhmatov2002convergence}),
we also ask the following.

\begin{question}
Is there a natural condition on \(\mc A,\mc B\) guaranteeing
\(\plI\win G_1(\mc A,\mc B)\Rightarrow\plI\win G_{fin}(\mc A,\mc B)\)?
\end{question}

\section{Acknowledgements}

The author would like to thank Alan Dow, Jared Holshouser,
and Alexander Osipov for various discussions related to this paper.

\bibliographystyle{plain}
\bibliography{bibliography}

\end{document}